\newtheorem{theorem}{Theorem}[section]
\newtheorem{corollary}[theorem]{Corollary}
\newtheorem{notation}{Notation}
\theoremstyle{definition}
\newtheorem{definition}[theorem]{Definition}
\newtheorem{example}[theorem]{Example}
\newtheorem{observation}[theorem]{Observation}
\def\td{\gamma_3}
\newcounter{fig}
\newcommand{\newfig}[1]{
\refstepcounter{fig} \label{#1}%
Figure
\thefig
}
\begin{document}

\title{three  domination number and connectivity in graphs}
\author{S. Mehry and R. Safakish}
\maketitle
Keywords:  graph, domination set , domination number, three domination number, connectivity\\

\begin{abstract}
In a graph $G$, a vertex dominates itself
and its neighbors. A subset $S$ of $V$ is called a
dominating set in $G$
 if every vertex in $V$ is
dominated by at least one vertex in $S$. The
domination number $\gamma(G)$ is the minimum
cardinality of a dominating set. A set $S\subseteq V$ is
called a double dominating set of a graph $G$ if
every vertex in $V$ is dominated by at least two
vertices in $S$. The minimum cardinality of a
double dominating set is called double
domination number of $G$. The connectivity $\gamma(G)$ of a connected
graph $G$ is the minimum number of vertices
whose removal results in a disconnected or
trivial graph.  In this paper,
introduced the
concept of three  domination in graphs.
and we obtain an upper
bound for the sum of the three
domination number and connectivity of a
graph and characterize the corresponding
extremal graphs.
\end{abstract}
\vskip 0.2 true cm

\section{Introduction}
By the graph $G = (V,E)$ we mean a
finite, undirected and connected graph with
neither loops nor multiple edges.
 The {\bf degree} of any vertex $u$ in
$G$ is the number of edges incident with $u$
and is denoted by $d(u)$. The minimum and
maximum degree of a graph $G$ is denoted
by $\Delta(G)$ and $\delta(G)$ (or $\Delta$ and $\delta$) respectively.
The open neighbourhood of a vertex $v\in V$, denoted by $N(v)$ is the set of all
 vertices adjacent to $v$.
 A {\bf vertex (edge) cut} , or {\bf cut-vertex (cut-edge)} of a graph $G$ is a vertex whose removal increases the number of components. The
{\bf connectivity} $\kappa(G)$ of a connected graph $G$
is the minimum number of vertices whose
removal results in a disconnected or trivial
graph. 
The  {\bf union} of the two graphs $G$ and $H$, written as $G\cup H$  will have vertex set   $V(G)\cup V(H)$ 
and edge set $E(G)\cup E(H)$.
The  {\bf sum}  of two graphs $G$ and $H$, written as $G+ H$, is obtained by first forming the union $G\cup H$ and then, making every vertex of $G$ adjacent to every vertex of $H$. 
A {\bf bipartite} graph is graph whose vertex set can be divided into disjoint set $V_1$ and $V_2$
 such that every edge has one end in $V_1$ and another end in $V_2$. A  {\bf complete bipartite}
 graph is a bipartite graph where every vertex of $V_1$ is adjacent to every vertex in $V_2$.
 The   complete bipartite
 graph with  partitions of order
  $|V_1|=m$ and  $|V_2|=n$, is denoted by $K_{m,n}$.
     A {\bf  friendship}  graph, denoted by $F_n$
  can be constructed by identifing $n$ copies of the cycle $C_3$ at common vertex. A {\bf wheel} graph, denoted by $W_n$ is a graph with $n$ vertices, formed by connecting a single vertex to all vertices  of an (n-1) cycle.
For graph
theoretic terminology we refer to Chartrand
and Lesniak \cite{a1} and Haynes et .al \cite{a2,a3}.
In a graph $G$, a vertex dominates
itself and its neighbors. A subset $S$ of $V$ is
called a dominating set in $G$ if every vertex
in $V$ is dominated by at least one vertex in
$S$. The domination number $\gamma(G)$ is the
minimum cardinality of a dominating set.
Harary and Haynes \cite{a4} introduced the
concept of double domination in graphs. A
set $S\subseteq V$ is called a double dominating set
of a graph $G$ if every vertex in $V$  is
dominated by atleast two vertices in $S$. The
minimum cardinality of double dominating
set is called double domination number of
$G$. 
Several authors have studied the
problem of obtaining an upper bound for
the sum of a domination parameter and a
graph theoretic parameter and
characterized the corresponding extremal
graphs. J.Paulraj Joseph and S.Arumugam
\cite{a5} proved that $\gamma(G)+\kappa (G)\leqslant n$ and
characterized the corresponding extremal
graphs.
We introduce the
concept of three  domination in graphs and  obtain an upper
bound for the sum of the three
domination number and connectivity of a
graph and characterize the corresponding
extremal graphs.
\section{preliminaries}
\begin{notation}
Let $G$ be a connected graph with $m$ vertices $v_1,v_2,\ldots,v_m$. The graph obtained from $G$  by attaching $n_i$ times a pndant vertex of $P_{l_i}$ on vertex $v_i$  is denoted by $G(n_1p_{l_1},n_2p_{l_2},\ldots n_mp_{l_m})$ where
$n_i,l_i\geqslant 0$ and $1\leqslant i\leqslant m$.
\end{notation}
\begin{example}
Let $v_1, v_2, v_3, v_4$ be the vertices of $C_4$. The graph $C_4(P_2, 2P_3, P_4, P_3)$
 is obtained from $C_4$ by attaching $1$ time
a pendant vertex of $P_2$ on $v_1$, $2$ time a pendant vertex of $P_3$ on $v_2$, $1$ time a pendant vertex of $P_4$ on $v_3$
 and $1$ time a pendant vertex of $P_3$ on $v_4$ as is shown in Figure \ref{ex1}
\end{example}
\begin{center}
\begin{tikzpicture}[line width=.4mm,line cap=round,line join=round,>=triangle 45,x=1.50cm,y=1.0cm]
\draw (1.,1.)-- (2.,1.);
\draw (1.,1.)-- (1.,0.);
\draw (1.,0.)-- (2.,0.);
\draw (2.,0.)-- (2.,1.);
\draw (1.,0.)-- (0.,0.);
\draw (1.,1.)-- (0.5,1.5);
\draw (0.,1.5)-- (0.5,1.5);
\draw (1.,1.)-- (0.5,1.);
\draw (0.5,1.)-- (0.,1.);
\draw (2.,1.)-- (2.5,1.);
\draw (2.5,1.)-- (3.,1.);
\draw (3.,1.)-- (3.5,1.);
\draw (2.,0.)-- (2.5,0.);
\draw (2.5,0.)-- (3.,0.);
\draw [fill=black] (1.,1.) circle (2pt);
\draw [fill=black] (2.,1.) circle (2pt);
\draw [fill=black] (1.,0.) circle (2pt);
\draw [fill=black] (2.,0.) circle (2pt);
\draw [fill=black] (0.,0.) circle (2pt);
\draw [fill=black] (0.5,1.5) circle (2pt);
\draw [fill=black] (0.,1.5) circle (2pt);
\draw [fill=black] (0.5,1.) circle (2pt);
\draw [fill=black] (0.,1.) circle (2pt);
\draw [fill=black] (2.5,1.) circle (2pt);
\draw [fill=black] (3.,1.) circle (2pt);
\draw [fill=black] (3.5,1.) circle (2pt);
\draw [fill=black] (2.5,0.) circle (2pt);
\draw [fill=black] (3.,0.) circle (2pt);
\begin{scriptsize}
\draw (0.85,-0.15) node[anchor=north west] {$v1$};
\draw (0.85,1.5) node[anchor=north west] {$v_2$};
\draw (1.85,1.5) node[anchor=north west] {$v_3$};
\draw (1.85,-0.15) node[anchor=north west] {$v_4$};
\draw (.8,-0.7) node[anchor=north west] {$C_4(P_2, 2P_3, P_4, P_3)$};
\end{scriptsize}
\end{tikzpicture}\\
\newfig{ex1}
\end{center}
\begin{definition}
A set $S\subseteq V$ is called a three dominating set
of a graph $G$ if every vertex in $V\setminus S$ is dominated by   atleast three vertices in $S$. The
minimum cardinality of three dominating
set is called three  domination number of
$G$ and is denoted by $\td(G)$.
\end{definition}
\begin{example}
$\td(K_n)=3$, for $n\geqslant 3$ and $\td(P_n)=\td(C_n)=n$. For graphs $G_1$ and $G_2$ in Figure \ref{ex24},
$S_1=\{v_1,v_3,v_5\}$  and  $S_2=\{v_3,v_4,v_5,v_6\}$ are a three domination set respectively. Then, $\td(G_1)=3$ and $\td(G_2)=4$.
\begin{center}
\begin{tikzpicture}[line width=.4mm,line cap=round,line join=round,>=triangle 45,x=1.0cm,y=1.0cm]
\coordinate (a2) at (2.796721281221858,1.628968706251709);
\coordinate  (a1) at  (2.,3.);
\coordinate (a6) at  (0.4142914296196061,2.995534777600331) ;
\coordinate  (a5) at  (-0.37469585953893025,1.6200382614523703);
\coordinate (a4) at (0.4220254216829272,0.24900696770407937);
\coordinate (a3) at (2.007733992063321,0.253472190103748);
\path (a1) ++(6,0) coordinate (b1);
\path (a2) ++(6,0) coordinate (b2);
\path (a3) ++(6,0) coordinate (b3);
\path (a4) ++(6,0) coordinate (b4);
\path (a5) ++(6,0) coordinate (b5);
\path (a6) ++(6,0) coordinate (b6);
\draw (b1) --(b2) ;
\draw (b1) --(b3) ;
\draw (b1) --(b4) ;
\draw (b6) --(b3) ;
\draw (b6)  -- (b4) ;
\draw (b6) --(b5) ;
\draw (b5) --(b2) ;
%
\draw [fill=black] (b1)  circle  (2pt)   ;
\draw [fill=black] (b2)  circle  (2pt)   ;
\draw [fill=black] (b6)  circle  (2pt)  ;
\draw [fill=black] (b4)  circle  (2pt)  ; 
\draw [fill=black] (b5)  circle  (2pt)  ;
\draw [fill=black] (b3)  circle  (2pt)  ;
\draw [fill=black] (a1)  circle  (2pt)   ;
\draw [fill=black] (a2)  circle  (2pt)   ;
\draw [fill=black] (a6)  circle  (2pt)  ;
\draw [fill=black] (a4)  circle  (2pt)  ; 
\draw [fill=black] (a5)  circle  (2pt)  ;
\draw [fill=black] (a3)  circle  (2pt)  ;
\begin{scriptsize}
\draw (0.2,3.5) node[anchor=north west] {$v_1$};
\draw (1.8,3.5) node[anchor=north west] {$v_2$};
\draw (2.9,1.8) node[anchor=north west] {$v_3$};
\draw (2.,0.1) node[anchor=north west] {$v_4$};
\draw (0.1,0.1) node[anchor=north west] {$v_5$};
\draw (-0.95,1.8) node[anchor=north west] {$v_6$};
\draw (6.2,3.5) node[anchor=north west] {$v_1$};
\draw (7.8,3.5) node[anchor=north west] {$v_2$};
\draw (8.9,1.8) node[anchor=north west] {$v_3$};
\draw (8.,0.1) node[anchor=north west] {$v_4$};
\draw (6.1,0.1) node[anchor=north west] {$v_5$};
\draw (5.05,1.8) node[anchor=north west] {$v_6$};
\draw (1,-.3) node[anchor=north west] {$G_1$};
\draw (7,-.3) node[anchor=north west] {$G_2$};
\end{scriptsize}
\draw (a1)--(a2)--(a3) --(a4) --(a5)--(a6)--(a1) ;
\draw (a1) --(a4) ;
\draw (a2) --(a5) ;
\draw (a6) --(a3) ;
\end{tikzpicture}\\
\newfig{ex24}
\end{center}
\end{example}
\begin{observation} 
 $3\leqslant \td(G)\leqslant n$. (For $n\geqslant 3$)
\end{observation}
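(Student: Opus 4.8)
The plan is to prove the two bounds separately, each following almost immediately from the definition of a three dominating set together with the standing hypothesis $n\geq 3$.

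For the upper bound $\td(G)\leq n$, I would simply exhibit $S=V$ as a candidate three dominating set. Then $V\setminus S=\emptyset$, so the defining requirement---that every vertex of $V\setminus S$ be dominated by at least three vertices of $S$---holds vacuously. Since $\td(G)$ is by definition the minimum cardinality over all three dominating sets, this already gives $\td(G)\leq |V|=n$.

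For the lower bound $\td(G)\geq 3$, I would argue directly that no set of cardinality at most $2$ can be three dominating. Let $S$ be any three dominating set and suppose for contradiction that $|S|\leq 2$. Because $n\geq 3>|S|$, the set $V\setminus S$ is nonempty, so I may choose a vertex $v\in V\setminus S$. By definition $v$ must be dominated by at least three vertices of $S$; but a vertex $w\in S$ dominates $v$ only when $w=v$ or $w$ is adjacent to $v$, and since $v\notin S$ the first option is excluded, leaving at most $|S|\leq 2$ such $w$. This contradicts the requirement of three, forcing $|S|\geq 3$ and hence $\td(G)\geq 3$. The only point requiring care---and the nearest thing to an obstacle---is the appeal to $n\geq 3$ to guarantee that $V\setminus S$ is nonempty when $|S|\leq 2$; without this hypothesis the condition would be satisfied vacuously and the lower bound could fail (indeed for $n\leq 2$ one would have $\td(G)=n<3$). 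Combining the two inequalities yields $3\leq\td(G)\leq n$.
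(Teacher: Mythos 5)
Your proof is correct: the paper states this as an observation with no proof at all, treating both bounds as immediate from the definition, and your argument (taking $S=V$ for the upper bound, and noting that a vertex outside a set of size at most $2$ cannot have three dominators in it, using $n\geqslant 3$ to guarantee such a vertex exists) is exactly the direct verification the paper implicitly relies on. Your remark that the lower bound fails for $n\leqslant 2$ correctly explains why the hypothesis $n\geqslant 3$ is needed.
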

\begin{theorem}\label{t25}
For any graph $G$, $\td (G)= n$ if and only if $\Delta(G)\le2$.
\end{theorem}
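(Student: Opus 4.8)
The plan is to prove both implications directly from the definition of a three dominating set, resting everything on a single observation: any vertex lying \emph{outside} a three dominating set $S$ must receive its three dominators entirely from its neighbours in $S$, and hence must have degree at least three. I read the hypothesis $\Delta(G)\le 2$ as the statement that no vertex of $G$ has degree exceeding two, which is the reading forced by the examples $\td(P_n)=\td(C_n)=n$ (every vertex of degree at most two) and $\td(K_n)=3<n$ (a vertex of degree $n-1$).

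For the forward direction I would argue by contraposition: assume $G$ has a vertex $v$ with $d(v)\ge 3$ and exhibit a three dominating set of size $n-1$. Take $S=V\setminus\{v\}$. The only vertex of $V\setminus S$ is $v$ itself, and since all of its (at least three) neighbours lie in $S$, the vertex $v$ is dominated by at least three vertices of $S$; thus $S$ is a three dominating set. Therefore $\td(G)\le n-1<n$, so the assumption $\td(G)=n$ forces every vertex to have degree at most two.

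For the converse, suppose every vertex of $G$ has degree at most two and let $S$ be any three dominating set. If $S$ were a proper subset of $V$, pick $v\in V\setminus S$; by definition $v$ is dominated by at least three vertices of $S$, and since $v\notin S$ these must be three distinct neighbours of $v$, giving $d(v)\ge 3$, a contradiction. Hence $S=V$ is the only three dominating set, and as $V$ itself trivially three-dominates $G$ (the set $V\setminus V$ is empty), we conclude $\td(G)=n$.

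I expect no serious obstacle; the entire argument reduces to correctly unwinding the domination condition for vertices outside $S$. The one point demanding care is the counting of dominators: because a vertex dominates itself but $v\notin S$, the vertex $v$ gets no contribution from itself, so the three required dominators must all lie in $N(v)\cap S$. This is precisely what converts the three-domination requirement into the degree bound $d(v)\ge 3$ and makes both directions immediate.
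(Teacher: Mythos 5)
Your proof is correct and takes essentially the same approach as the paper: the paper's entire proof is the one-line observation that $d(v)\ge 3$ if and only if $V\setminus\{v\}$ is a three dominating set, which is exactly the equivalence driving both of your directions. Your write-up simply makes explicit what the paper leaves implicit, in particular the key counting point that a vertex outside $S$ cannot dominate itself and so needs three neighbours in $S$.
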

\begin{proof}
Suppose  $v\in V$. Then, $d(v)\ge 3$ if and only if  $V\setminus\{v\}$ be  a three dominating set. 
\end{proof}
\begin{theorem}\cite{a1}
For any graph $G$, $\kappa(G)\leqslant\delta(G)$.
\end{theorem}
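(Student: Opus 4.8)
The plan is to exhibit an explicit vertex cut whose size equals the minimum degree, so that the minimality built into the definition of $\kappa(G)$ immediately forces $\kappa(G)\le\delta(G)$. I would not try to reason about all cuts at once; instead I would produce one particularly cheap cut and let it do the work.

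First I would choose a vertex $v\in V$ realizing the minimum degree, so that $d(v)=\delta(G)$, and consider its open neighbourhood $N(v)$, which is a set of exactly $\delta(G)$ vertices. The key observation is that deleting $N(v)$ from $G$ removes every edge incident with $v$, so that in the graph $G-N(v)$ the vertex $v$ becomes isolated. Thus $N(v)$ is a candidate for a separating set, and its cardinality is already the desired bound.

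Next I would split into two cases according to the order $n$ of $G$. If $n\ge\delta(G)+2$, then after deleting $N(v)$ at least one vertex other than $v$ survives; since $v$ is isolated, the remaining graph is disconnected, so $N(v)$ is a vertex cut and $\kappa(G)\le|N(v)|=\delta(G)$. If $n=\delta(G)+1$, then $v$ is adjacent to every other vertex and deleting $N(v)$ leaves only $v$, a trivial graph; by the definition of connectivity this again yields $\kappa(G)\le\delta(G)$.

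The only delicate point is this last extremal case (essentially the complete graph), where deleting $N(v)$ trivializes rather than genuinely disconnects $G$. Here one must appeal to the clause in the definition of $\kappa(G)$ that explicitly permits the removal to result in a trivial graph, rather than insisting on a proper disconnection. Once that convention is invoked, the two cases together exhaust all connected $G$, and the inequality follows with no further computation.
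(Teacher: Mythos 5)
Your proof is correct. Note, however, that the paper does not prove this statement at all: it is quoted from Chartrand and Lesniak \cite{a1} as a known result, so there is no internal argument to compare against. What you have written is the standard textbook proof --- isolate a minimum-degree vertex $v$ by deleting $N(v)$, then split according to whether anything besides $v$ survives --- and you correctly handle the one delicate point, namely that when $n=\delta(G)+1$ the deletion yields a trivial rather than disconnected graph, which is exactly why the paper's definition of $\kappa(G)$ (and the standard one) includes the ``or trivial'' clause. Your case split is exhaustive since $\delta(G)\leqslant n-1$ always holds, so the argument is complete as stated.
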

\begin{theorem}\label{t27}
Suppose $n\geqslant 5$,  $M$ is an matching of $K_n$ and $G=K_n\setminus\! M$. If
 $M$ is a perfect matching, then, $\td(G)=4$ and otherwise $\td(G)=3$.
 \end{theorem}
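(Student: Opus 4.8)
The plan is to analyze the structure of $G = K_n \setminus M$ directly. First I would record the key observation that in $G$ every vertex is adjacent to all others except possibly its $M$-partner: a vertex $v$ matched to $v'$ (i.e.\ $vv' \in M$) has $N(v) = V \setminus \{v, v'\}$, while an unmatched vertex is adjacent to every other vertex. Consequently, for a candidate set $S$ of size exactly $3$, the requirement that a vertex of $V \setminus S$ be dominated by at least three vertices of $S$ forces that vertex to be adjacent to \emph{all three} members of $S$; and the only way a vertex $v \notin S$ can fail this is if $v$ is the $M$-partner of some element of $S$. Thus $S = \{a,b,c\}$ is a three dominating set if and only if each of $a,b,c$ is either unmatched or matched to another element of $S$.

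For the non-perfect case I would produce such a $3$-set explicitly. If $M = \emptyset$ then $G = K_n$ and $\td(K_n) = 3$ by the earlier example. Otherwise choose an edge $ab \in M$; since $M$ is not perfect there is an unmatched vertex $c \neq a,b$. Taking $S = \{a,b,c\}$, the partners of $a$ and $b$ lie in $S$ and $c$ is unmatched, so by the criterion above every vertex of $V \setminus S$ is adjacent to all of $a,b,c$; hence $S$ is a three dominating set, and together with the bound $\td(G) \geq 3$ from the Observation this gives $\td(G) = 3$.

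For the perfect matching case (so $n$ is even and $n \geq 6$) I would prove the two inequalities separately. For $\td(G) \geq 4$, suppose toward a contradiction that some $S = \{a,b,c\}$ is a three dominating set. Since $M$ is perfect, each of $a,b,c$ has a partner, and by the criterion that partner must also lie in $S$; but $M$ restricted to the odd-sized set $S$ cannot saturate all three vertices, so at least one element, say $c$, has its partner $c'$ outside $S$. Then $c' \in V \setminus S$ is not adjacent to $c$, so $c'$ has at most two neighbors in $S$, a contradiction. For the reverse inequality I would choose two distinct edges $ab, cd \in M$ (available since the perfect matching has $n/2 \geq 3$ edges) and set $S = \{a,b,c,d\}$. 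Every element of $S$ has its partner inside $S$, so no vertex of $V \setminus S$ is the $M$-partner of any element of $S$; hence each such vertex is adjacent to all four, giving at least three neighbors in $S$. Thus $S$ is a three dominating set and $\td(G) \leq 4$, whence $\td(G) = 4$.

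The routine part is the degree bookkeeping; the step needing genuine care is the lower bound $\td(G) \geq 4$ in the perfect case, where the parity argument---a perfect matching cannot be confined to a set of odd cardinality---is what forces a partner outside the $3$-set and drives the contradiction. I would state this parity reasoning cleanly, since it is the crux that separates the perfect from the non-perfect case.
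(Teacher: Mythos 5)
Your proposal is correct and follows essentially the same route as the paper: in the non-perfect case both take a matching edge together with an unmatched vertex as the $3$-set, and in the perfect case both use the parity argument that a perfect matching cannot saturate a $3$-set internally (forcing a partner outside, hence $\td(G)>3$) plus an explicit $4$-set for the upper bound. Your write-up is in fact slightly more careful than the paper's, since you handle $M=\emptyset$ explicitly and state the adjacency criterion cleanly, whereas the paper simply notes that \emph{any} four-element subset works (which is true, as every vertex has at most one non-neighbor).
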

 \begin{proof}
Suppose $M$ is not  a perfect matching. Then, $V(G)\setminus\! V(M)\neq\varnothing$.
 If  $x\in V (G)\setminus\! V(M)$ and $uv\in E(M)$  then, $\{x,u,v\}$ is a three dominating set in $G$.  Therefore $\td(G)=3$.
 Now your assumption M is  a perfect matching  and $U=\{v_1,v_2,v_3\}\subset V(G)$.
 In this case, since $M$ is a perfect, there is a vertex $w\in V(G)\setminus\! U$ which is not adjacent to one of the vertices of $U$. Hence $\td(G)>3$.
 It is clear that any four-element subset of $V$ is a three dominating  set. Hence $\td(G)=4$.
 \end{proof}
 \begin{corollary}
  If is $n$  an odd intager,  then, $\td(K_n\setminus\! M)=3$ where $M$ is  a matching.
 \end{corollary}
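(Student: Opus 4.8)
The plan is to obtain the corollary as an immediate consequence of Theorem \ref{t27}, since that theorem already resolves $\td(K_n \setminus\! M)$ according to whether $M$ is a perfect matching. The crux is a one-line parity observation, which I would state first: a perfect matching of $K_n$ saturates every vertex, partitioning the $n$ vertices into disjoint pairs, and therefore can exist only when $n$ is even. Hence, when $n$ is odd, no matching $M$ of $K_n$ can be perfect.

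With that in hand, the argument is purely a matter of invoking the right branch of Theorem \ref{t27}. When $n$ is odd and $n \geq 5$, the matching $M$ cannot be perfect by the parity observation, so we fall into the ``otherwise'' case of the theorem and conclude directly that $\td(K_n \setminus\! M) = 3$.

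The only point requiring separate attention is the boundary case $n = 3$, which lies outside the hypothesis $n \geq 5$ of Theorem \ref{t27}. Here I would simply check it by hand: a matching of $K_3$ is either empty, so that $K_n \setminus\! M = K_3$ with $\td(K_3) = 3$, or a single edge, so that $K_n \setminus\! M = P_3$ with $\td(P_3) = 3$; both values are supplied by the earlier example. I expect no genuine obstacle in this proof, since the entire content is the recognition that odd order precludes a perfect matching, after which Theorem \ref{t27} does all the remaining work.
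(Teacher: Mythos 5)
Your proposal is correct and matches the paper's intended derivation: the corollary is stated there without proof, precisely because it follows from Theorem \ref{t27} via the parity observation that an odd-order complete graph admits no perfect matching. Your explicit treatment of the boundary case $n=3$ (where $K_3\setminus M$ is $K_3$ or $P_3$, both with three domination number $3$) is a point of care the paper glosses over, since Theorem \ref{t27} assumes $n\geqslant 5$, but it does not change the approach.
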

\section{main results}
\begin{theorem}
For any  graph $G$,
$\td(G)+\kappa(G)\leqslant  2n-1$ and equality holds
if and only if $G$ is isomorphic to $K_3$.
\end{theorem}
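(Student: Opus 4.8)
The plan is to observe that both summands admit immediate individual upper bounds, so that the inequality itself is essentially free and the real content lies in the equality case. For the upper bound I would invoke the Observation that $\td(G)\le n$ together with the elementary fact that the connectivity of any graph on $n$ vertices satisfies $\kappa(G)\le n-1$ (removing all but one vertex already yields a trivial graph, and one cannot do better). Adding these two estimates gives
\[
\td(G)+\kappa(G)\le n+(n-1)=2n-1,
\]
with no further work required.

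For the characterization of equality, the key point is that the sum can attain $2n-1$ only when \emph{both} terms are simultaneously maximal, i.e.\ $\td(G)=n$ and $\kappa(G)=n-1$, since neither summand can exceed its own bound. I would first exploit $\kappa(G)=n-1$: combining the cited inequality $\kappa(G)\le\delta(G)$ with the trivial bound $\delta(G)\le n-1$ forces $\delta(G)=n-1$; but a graph in which every vertex has degree $n-1$ is complete, so $G\cong K_n$. Next I would use $\td(G)=n$, which by Theorem \ref{t25} is equivalent to $\Delta(G)\le 2$.

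Finally I would combine the two conditions. Since $K_n$ has $\Delta(K_n)=n-1$, the requirement $\Delta(G)\le 2$ becomes $n-1\le 2$, that is $n\le 3$; together with the standing hypothesis $n\ge 3$ this pins down $n=3$ and hence $G\cong K_3$. For the converse I would merely verify that $K_3$ realizes equality, using $\td(K_3)=3=n$ and $\kappa(K_3)=2=n-1$, so that the sum equals $5=2\cdot 3-1$. The only mildly delicate step is the equality analysis itself, namely recognizing that tightness of the sum forces tightness of each of the two bounds and then translating $\kappa(G)=n-1$ into $G\cong K_n$ via $\kappa\le\delta$; once these are in place the remaining reasoning is routine.
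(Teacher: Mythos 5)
Your proposal is correct and follows essentially the same route as the paper: bound the two summands separately (using $\td(G)\le n$ and $\kappa(G)\le\delta(G)\le n-1$), note that equality forces $\td(G)=n$ and $\kappa(G)=n-1$ simultaneously, deduce $G\cong K_n$ from the connectivity condition, and then pin down $n=3$. The only cosmetic difference is the last step, where you invoke Theorem \ref{t25} ($\td(G)=n\iff\Delta(G)\le 2$) instead of the paper's direct use of $\td(K_n)=3$; both are immediate from the preliminaries.
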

\begin{proof}
$\td(G)+\kappa(G)\leqslant n+\delta(G)\leqslant  2n-1$.
Let $\td(G)+\kappa(G)=2n-1$. Then, $\td(G)=n$ and $\kappa(G)=n-1$. Then, $G$
 is a complete graph on $n$ vertices. Since $\td(K_n)=3$ we
have $n=3$. Hence $G$ is isomorphic to $K_3$.
The converse is obvious.
\end{proof}
\begin{theorem}\label{t32}
For any connected graph
$G$, $\td(G)+\kappa (G)=2n-2$ if and only if $G$
is isomorphic to $K_4$, $C_4$ or $K_{1,2}$.
\end{theorem}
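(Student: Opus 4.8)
The plan is to reduce the equality $\td(G)+\kappa(G)=2n-2$ to a short case analysis driven by the two crude bounds $\td(G)\le n$ (from the Observation) and $\kappa(G)\le n-1$, the latter together with the standard fact that $\kappa(G)=n-1$ holds exactly when $G$ is complete (already invoked in the preceding theorem). Since the largest possible value of this sum is $2n-1$, attaining $2n-2$ forces the pair $(\td(G),\kappa(G))$ to fall exactly one unit short of its ceiling, which leaves only two possibilities: either $\td(G)=n$ and $\kappa(G)=n-2$, or else $\td(G)\le n-1$, in which case $\kappa(G)\ge n-1$ forces $\td(G)=n-1$ and $\kappa(G)=n-1$.

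First I would handle the case $\td(G)=n$. By Theorem~\ref{t25} this is equivalent to $\Delta(G)\le 2$, so a connected $G$ must be a path $P_n$ or a cycle $C_n$. Then I invoke $\kappa(P_n)=1$ and $\kappa(C_n)=2$ and match these against the requirement $\kappa(G)=n-2$: the path case forces $n-2=1$, i.e. $n=3$ and $G\cong P_3\cong K_{1,2}$, while the cycle case forces $n-2=2$, i.e. $n=4$ and $G\cong C_4$.

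Next I would handle the case $\kappa(G)=n-1$, which gives $G\cong K_n$, whence $\td(K_n)=3$. Setting $\td(G)=n-1$ equal to $3$ yields $n=4$, so $G\cong K_4$. Together the two cases show that the only candidates are $K_{1,2}$, $C_4$, and $K_4$. For the converse I would simply verify each graph directly: $\td(K_4)+\kappa(K_4)=3+3$, $\td(C_4)+\kappa(C_4)=4+2$, and $\td(K_{1,2})+\kappa(K_{1,2})=3+1$, each equal to $2n-2$ for the relevant $n$.

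I expect no serious obstacle; the delicate points are merely the structural claim that a connected graph with $\Delta\le 2$ is precisely a path or a cycle, and the bookkeeping of small cases, in particular confirming that $K_3$ (the unique $2n-1$ extremal graph) does not reappear here. The decisive leverage throughout is Theorem~\ref{t25}, which converts the analytic condition $\td(G)=n$ into the combinatorial condition $\Delta(G)\le 2$ and thereby pins down the entire family in the high-$\td$ branch.
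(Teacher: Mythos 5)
Your proposal is correct, and its overall skeleton (the forced case split into $\td(G)=n-1,\ \kappa(G)=n-1$ versus $\td(G)=n,\ \kappa(G)=n-2$, with the first case yielding $K_4$ via completeness) matches the paper. But you resolve the second case by a genuinely different route. The paper works from the connectivity side: from $n-2=\kappa(G)\leqslant\delta(G)$ it deduces $\delta(G)=n-2$ (ruling out $\delta=n-1$ since $G$ would be complete), concludes that the non-edges form a matching so $G\cong K_n\setminus M$, and then matches the condition $\td(G)=n$ against the three-domination number of such graphs to land on $C_4$ and $K_{1,2}$. You instead work from the domination side: Theorem~\ref{t25} converts $\td(G)=n$ into $\Delta(G)\le 2$, so connectedness leaves only $P_n$ or $C_n$, and the requirement $\kappa(G)=n-2$ then pins down $n=3$ and $n=4$ respectively. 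Your route is arguably cleaner here: the paper's step ``$n=\td(K_n\setminus M)=3$ or $4$'' leans on Theorem~\ref{t27}, which is stated only for $n\geqslant 5$, so the small cases $n=3,4$ really need the separate verification that your argument sidesteps entirely. Interestingly, your method is exactly what the paper itself uses later in Case iii of Theorem~\ref{t33}, so the two approaches are both ``native'' to the paper; the paper's $K_n\setminus M$ analysis has the advantage of setting up machinery reused in Theorems~\ref{t33} and~\ref{t34}, while yours minimizes dependencies for this theorem alone.
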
\begin{proof}
 Let $\td(G)+\kappa(G)=2n-2$. Then,
there are two cases to consider.
\begin{enumerate}
\item[\bf i.] $\td(G)=n-1$ and $\kappa(G)=n-1$.
\item[\bf i\!\! i.] $\td(G)=n$ and $\kappa(G)=n-2$.
\end{enumerate}
{\bf Case i}. $\td(G)=n-1$ and $\kappa(G)=n-1$.
Then, $G$ is a complete graph on $n$
vertices. Since $\td(K_n)=3$ we have $n-1=3$ and $n=4$.
Hence $G$ is isomorphic to $K_4$.\\
{\bf Case i\!\! i}. $\td(G)=n$ and $\kappa(G)=n-2$.
Then, $n-2\leqslant \delta (G)$.
 If $\delta (G)=n-1$, then,
$G$ is a complete graph which is a
contradiction. 
Hence $\delta (G)=n-2$. Then, $G$ is
isomorphic to $K_n-M$ where $M$ is a matching
in $K_n$. Then, $n=\td(K_n-M)=3$ or $4$.  For  
 $n=4$,  $G$ is  isomorphic to $C_4$ and for  $n=3$, $G\simeq K_{1,2}$. The converse is
obvious.
\end{proof}
\begin{theorem}\label{t33}
For any  connected graph $G$
$\td(G)+\kappa(G)=2n-3$ if and only if $G$ is
isomorphic to one of the graphs $K_5$,  $C_5$,  $P_4$.%
\end{theorem}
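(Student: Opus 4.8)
The plan is to mirror the structure used for Theorem \ref{t32} and the $2n-1$ result: begin from the chain $\td(G)+\kappa(G)\le n+\delta(G)\le 2n-1$ and ask what forces the sum down to exactly $2n-3$. Since $\td(G)\le n$ and $\kappa(G)\le\delta(G)\le n-1$, the total deficit from $2n-1$ must equal $2$, and this splits into exactly three cases according to the value of the pair $(\td(G),\kappa(G))$: namely $(n-2,\,n-1)$, $(n-1,\,n-2)$, and $(n,\,n-3)$. I would dispose of the two ``outer'' cases first, as they are the clean ones.

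In the case $(\td,\kappa)=(n-2,n-1)$, connectivity $n-1$ forces $G\cong K_n$, and since $\td(K_n)=3$ we obtain $n-2=3$, i.e. $G\cong K_5$. In the case $(\td,\kappa)=(n,n-3)$, Theorem \ref{t25} gives $\Delta(G)\le 2$, so a connected $G$ is either a path $P_n$ or a cycle $C_n$; using $\kappa(P_n)=1$ and $\kappa(C_n)=2$ together with $\kappa=n-3$ pins down $n=4$ (yielding $P_4$) and $n=5$ (yielding $C_5$) respectively. These two cases account for all three listed graphs.

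The remaining case $(\td,\kappa)=(n-1,n-2)$ is where the real work lies, and I expect it to be the main obstacle. Here $\kappa=n-2$ forces $\delta(G)=n-2$, since $\kappa\le\delta\le n-1$ and $\delta=n-1$ would make $G$ complete with $\kappa=n-1$; consequently $G=K_n\setminus M$ for some nonempty matching $M$. For $n\ge 5$, Theorem \ref{t27} bounds $\td(K_n\setminus M)\le 4$, so $n-1\le 4$ forces $n=5$; but $K_5$ has no perfect matching, so that theorem gives $\td=3\neq 4$, a contradiction, and the case is empty for $n\ge 5$. The delicate part is the small graphs $n=3,4$, which fall outside the hypotheses of Theorem \ref{t27} and must be examined by hand by testing each $K_n\setminus M$ directly against the two conditions $\td=n-1$ and $\kappa=n-2$. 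This hand analysis is the crux of the argument and must be carried out carefully, since it is precisely the place where an extremal graph can slip into or out of the characterization; I would treat it as the step most likely to require revision.

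Finally I would verify the converse by direct computation: $\td(K_5)+\kappa(K_5)=3+4$, $\td(C_5)+\kappa(C_5)=5+2$, and $\td(P_4)+\kappa(P_4)=4+1$, each equal to $2n-3$ for the relevant value of $n$.
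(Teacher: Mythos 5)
You follow the paper's own decomposition into the three cases $(\td,\kappa)\in\{(n-2,\,n-1),\,(n-1,\,n-2),\,(n,\,n-3)\}$, and your treatment of the first and third cases, as well as your argument that the middle case is empty for $n\geqslant 5$, is correct. The genuine gap is the step you explicitly flag and then never perform: the direct inspection of $K_n\setminus M$ for $n\in\{3,4\}$. This is not a routine verification that can be deferred --- it is the point where the argument collapses. For $n=3$ the case is indeed empty, since $\td(G)\geqslant 3>n-1$. But for $n=4$, take $M$ to be a single edge $ab$ and $G=K_4\setminus M$ (the diamond). Then $S=\{a,b,c\}$ is a three dominating set, since the only vertex outside $S$ is adjacent to all three of its members; hence $\td(G)=3=n-1$. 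Moreover, removing the two vertices of degree $3$ disconnects $a$ from $b$, and $G$ has no cut-vertex, so $\kappa(G)=2=n-2$. Therefore $\td(G)+\kappa(G)=5=2n-3$, yet $G$ is not isomorphic to any of $K_5$, $C_5$, $P_4$.

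Consequently no completion of your outline can prove the statement: the statement itself is false as written, and the missing hand analysis is exactly where an extra extremal graph ``slips in,'' as you yourself suspected. (The paper's proof has the same defect: in its Case~ii it invokes Theorem \ref{t27}, whose hypothesis is $n\geqslant 5$, to declare both subcases $\td(G)=3$ and $\td(G)=4$ contradictory, thereby overlooking $K_4\setminus M$ with $|M|=1$.) A correct version of the theorem must list $K_5$, $C_5$, $P_4$ and $K_4$ minus an edge; with that amended statement, your outline, completed by the four-vertex check above, goes through.
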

\begin{proof}
Let
$\td(G)+\kappa(G)=2n-3$. Then,
there are three cases to consider
\begin{enumerate}
\item[\bf i.] $\td(G)=n-2$ and $\kappa(G)=n-1$.
\item[\bf i\!\! i.] $\td(G)=n-1$ and $\kappa(G)=n-2$.
\item[\bf i\!\! i\!\! i.]$\td(G)=n$ and $\kappa(G)=n-3$.
\end{enumerate}
{\bf Case i}. 
In this case, like  Theorem \ref{t32}  $G$ is isomorphic to $K_5$.\\
{\bf Case i\!\! i}.
 $\td(G)=n-1$ and $\kappa(G)=n-2$.
Then, $n-2\leqslant \delta (G)$.
Hence $\delta (G)=n-2$ and  $G$ is
isomorphic to $K_n-M$ where $M$ is a matching in $K_n$.
 Hence, from Theorem \ref{t27}, $\td(G)=3$ or $4$  and 
both cases are contradictory under the existing the conditions of theorem.
 \\
 {\bf Case i\!\! i\!\! i}. $\td(G)=n$
 and $\kappa(G)=n-3$.
Then, $n-3\leqslant \delta (G)$. In this case $G\simeq C_n$ or  $G\simeq P_n$ (From Theorem \ref{t25}).
We have $\kappa(C_n)=2$ and $\kappa(P_n)=1$. Hence $G$ is isomorphic to $C_5$ or $P_4$. The converse is obvious.
\end{proof}
\begin{theorem}\label{t34}
For any connected  graph $G$,
$\td(G)+\kappa(G)=2n-4$ if and only if $G$ is
isomorphic to  one of the graphs
   $K_6$, $K_6\setminus\! M$ (where $M$ is a perfect matching.), $C_6$,   $K_5\setminus\! M$ (where $M$ is a matching.), $P_5$,  $P_4$,  $C_3(P_2,0,0)$, $K_{1,3}$, $K_1+P_4$,
or  graph obtained from the connecting two non-adjacent vertices of $C_5$.
\end{theorem}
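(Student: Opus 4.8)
The plan is to break the equation $\td(G)+\kappa(G)=2n-4$ into cases exactly as in Theorems \ref{t32} and \ref{t33}. Since $\td(G)\le n$ and $\kappa(G)\le\delta(G)\le n-1$, the sum $2n-4$ can be realised only by the four pairs
\[
(\td(G),\kappa(G))\in\{(n-3,\,n-1),\ (n-2,\,n-2),\ (n-1,\,n-3),\ (n,\,n-4)\},
\]
which I label Cases i--iv. Cases i, ii and iv are dispatched by the devices already used above; the substance of the theorem is Case iii.

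For Case i, $\kappa(G)=n-1$ forces $G=K_n$, and $\td(K_n)=3=n-3$ gives $n=6$, so $G\cong K_6$. For Case ii, $\kappa(G)=n-2$ gives $\delta(G)\ge n-2$; as $G$ is not complete we get $\delta(G)=n-2$, hence the complement of $G$ has maximum degree $1$ and $G=K_n\setminus M$ for a matching $M$. Because $\td(G)=n-2\ge 3$ we have $n\ge5$, so Theorem \ref{t27} applies: a perfect $M$ gives $\td=4$, whence $n=6$ and $G\cong K_6\setminus M$, while a non-perfect $M$ gives $\td=3$, whence $n=5$ and $G\cong K_5\setminus M$. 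For Case iv, $\td(G)=n$ forces $\Delta(G)\le 2$ by Theorem \ref{t25}, so $G$ is a path or a cycle, and $\kappa(P_n)=1=n-4$, $\kappa(C_n)=2=n-4$ single out $P_5$ and $C_6$.

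Case iii, $\td(G)=n-1$ and $\kappa(G)=n-3$, is the real work and needs a usable description of the equality $\td(G)=n-1$. First I would record the fact behind Theorem \ref{t25}: for $T\subseteq V$, the set $V\setminus T$ is three dominating precisely when every vertex of $T$ has at least three neighbours outside $T$. Thus $\td(G)=n-1$ holds iff $\Delta(G)\ge 3$ (so one vertex may be deleted) while no two vertices $u,v$ each retain three neighbours in $V\setminus\{u,v\}$. Feeding in $\delta(G)\ge\kappa(G)=n-3$ makes this extremely rigid. A counting step kills large $n$: if $n\ge 7$ then every vertex has at least $n-3\ge 4$ neighbours, so for any two vertices each keeps at least three neighbours after the other is removed, forcing $\td\le n-2$. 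The boundary value $n=6$ must be excluded by hand: the same counting forces $\delta(G)=3$, and then a vertex of degree $\ge 4$ makes every other vertex adjacent to it of degree $3$, i.e. $G=K_1+C_5$, for which a direct check gives $\td=4<5$; the remaining $3$-regular possibility forces every pair of vertices to be adjacent, impossible on six vertices. Hence Case iii survives only for $n=4$ and $n=5$, where a finite check of the admissible degree sequences leaves the star $K_{1,3}$ and the paw $C_3(P_2,0,0)$ (for $n=4$, $\td=3$, $\kappa=1$) together with $K_1+P_4$ and the graph obtained by joining two non-adjacent vertices of $C_5$ (for $n=5$, $\td=4$, $\kappa=2$).

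Collecting the four cases produces the asserted family, and the converse is a routine verification: for each listed graph one reads off $\kappa$ from an obvious vertex cut and computes $\td$ from the deletion criterion above, confirming the sum is $2n-4$. I expect the two genuinely delicate steps to be the clean exclusion of $n=6$ in Case iii and the completeness of the degree-sequence enumeration for $n=5$; everything else parallels the earlier theorems. As a sanity check on the statement, note that $P_4$ has $\td=4$ and $\kappa=1$, so it realises $2n-3$ and belongs to Theorem \ref{t33} rather than to the present list.
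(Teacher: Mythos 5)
Your proposal is correct, and on the decisive case it takes a genuinely different route from the paper --- one that moreover exposes an error in the printed statement. Cases i, ii and iv are handled exactly as in the paper (complete graphs; $\delta(G)=n-2$ forcing $G=K_n\setminus M$ and then Theorem \ref{t27}; $\Delta(G)\le 2$ via Theorem \ref{t25}). For Case iii the paper takes a minimum vertex cut $U$ of size $n-3$, observes that $\langle V\setminus U\rangle$ is a disconnected graph on three vertices, hence $\overline{K_3}$ or $K_1\cup K_2$, and in each subcase exhibits an explicit small three dominating set to bound $n$; you instead convert $\td(G)=n-1$ into a pairwise deletion criterion (no two vertices may each retain three neighbours when both are removed), which together with $\delta(G)\ge\kappa(G)=n-3$ kills $n\ge 7$ by pure counting and excludes $n=6$ by rigidity. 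Both routes then end with a finite check at $n=4,5$ that neither you nor the paper writes out in full, so the level of rigor there is comparable. What your criterion buys is precisely the verification step the paper skips: at $n=4$ the paper reads $P_4$ and $C_3(P_2,0,0)$ off the cut structure $K_1\cup K_2$ but never rechecks that $\td=n-1$; since $\td(P_4)=4$ rather than $3$, the graph $P_4$ has $\td+\kappa=5=2n-3$, which is why it (correctly) appears in Theorem \ref{t33}, and its inclusion in the present statement is an error --- your list, which omits $P_4$, is the right one. Two small repairs are needed on your side: in Case ii you should note $M\ne\varnothing$ (otherwise $G$ is complete, contradicting $\kappa(G)=n-2$), and in the $n=6$ subcase with a vertex $w$ of degree at least $4$ you should spell out that the pairwise criterion forces every other vertex to be adjacent to $w$ and of degree exactly $3$, so the remaining five vertices induce a $2$-regular graph and $G\cong K_1+C_5$; both are one-line fixes.
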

\begin{proof}
Take
$\td(G)+\kappa(G)=2n-4$. Then,
there are four  cases to consider
\begin{enumerate}
\item[\bf i.] $\td(G)=n-3$ and $\kappa(G)=n-1$.
\item[\bf i\!\! i.] $\td(G)=n-2$ and $\kappa(G)=n-2$.
\item[\bf i\!\! i\!\! i.] $\td(G)=n-1$ and $\kappa(G)=n-3$.
\item[\bf i\!\! v.] $\td(G)=n$ and $\kappa(G)=n-4$.
\end{enumerate}
{\bf Case i}. In this case $G$ is isomorphic to $K_6$.\\
{\bf Case i\!\! i}. $\td(G)=n-2$ and $\kappa(G)=n-2$.
Then, $n-2\leqslant \delta (G)$.
 If $\delta (G)=n-1$, then,
$G$ is a complete graph which is a
contradiction. 
Hence $\delta (G)=n-2$. Then, $G$ is
isomorphic to $K_n\setminus\! M$ where $M$ is a matching
in $K_n$. Then, $\td(G)=3$ or $4$.
For $\td(G)=3$, $G$
isomorphic to $K_5\setminus\! M$ and  
from Theorem \ref{t27}, for $\td(G)=4$, $G$ is isomrphic to $K_6\setminus\! M$ where $M$ is a perfect mathching.\\
{\bf Case i\!\! i\!\! i}. $\td(G)=n-1$
 and $\kappa(G)=n-3$.
Then, $n-3\leqslant \delta (G)\leqslant n-2$. 
$\delta (G)=n-2$ is inconsistent with the assumptions of Theorem.
 Hence  $\delta(G)=\kappa(G)=n-3$. Suppose 
 $U=\{u_1,u_2,\ldots,u_{n-3}\}$ br vertex cut of $G$ and let 
$V\setminus\! U=\{v_1,v_2,v_3\}$. There are two  cases for $\langle V\setminus\! U\rangle$ to consider
\begin{enumerate}
\item[\bf i\!\! i\!\! i.a] $\langle V\setminus\! U\rangle=\overline{K_3}$.
\item[\bf i\!\! i\!\! i.b] $\td(G)=K_1\cup K_2$.
\end{enumerate}
{\bf Case i\!\! i\!\! i.a}. In this  case,  every vertex of $V\setminus\! U$ is
adjacent to all the vertices of $U$. Then, $V\setminus\! U$ is a three dominating set of $G$ 
hence, $n-1=\td(G)=3$ and $n=4$. Hence  $G\simeq K_{1,3}$.\\
{\bf Case i\!\! i\!\! i.b.}
 Suppos $V( K_2)=\{v_1,v_2\}$. Then, $v_3$ is adjacent to all the vertices in $U$ and
 $v_1$, $v_2$ are not adjacent  to at most one vertex in $U$.
 Suppose exsist $x,y\in U$  (It is possible that  $x=y$) so that
  $xv_1,yv_2\not\in E(G)$. Then, $\{x,y,v_1,v_2,v_3\}$ is a three dominating set.
 Then, $n-1=\td(G)\le 5$. This gives $4\leqslant n\leqslant 6$.
 $n=6$ is impossible.
 because $v_2\in N(v_1)$ and  at least two vertices of of $U$ belong to  $N(v_1)$. Henc $V\setminus\{v_1,v_3\}$ is a three dominating set of $G$ which is a contradiction.
  For $n=5$, $G$ is isomorphic to  $K_1+P_4$ or
  or the graph obtained from connecting two non-adjacent vertices of $C_5$. 
 If $n=4$, then, G is isomorphic to $P_4$ or  $C_3(P_2,0,0)$.\\
{\bf Case i\!\! v}. $\td(G)=n$
 and $\kappa(G)=n-4$. 
 From theorem \ref{t25},  $d(v)\leqslant 2$ for all $v\in V$. hence,
$$n-4=\kappa(G)\leqslant\delta(G)\leqslant 2\Rightarrow 4\leqslant n\leqslant6$$
$n=4$, is impossible then, for $n=6$ and $n=5$, $G\simeq C_6$ and $G\simeq P_5$, respectively. The converse is obvious.
\end{proof}
\begin{theorem}
For any connected graph G
$\td(G)+\kappa(G)=2n-5$ if and only if $G$ is
isomorphic to  one of the graphs 
$K_7$, $K_6-M$ ($M$ is a matching with $|M|<3$.), 
$\overline{P_2\cup P_2}$, $\overline{C_4\cup K_1\cup K_1}$,
$\overline{C_4\cup P_2}$, $\overline{P_6}$, $\overline{C_6}$, $C_6$, $P_6$, $W_5$,
$K_{2,3}$,$K_{1,4}$, $K_2+K_3$, $\overline{P_3\cup P_2}$, $\overline{P_3\cup K_1\cup K_1}$,
$F_2$, $C_4(P_2,0,0,0)$,$P_3(0,P_3,0)$, $C_3(2P_2,0,0)$, $C_3(P_2,P_2,0)$
  or $T_i$ ($7\le i\le 12$) as shown in Figures 
\ref{ft104},\ref{ft105},\ref{ft107}
\end{theorem}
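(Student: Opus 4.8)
The plan is to argue exactly as in Theorems \ref{t32}--\ref{t34}: first reduce to finitely many numerical cases, then perform a structural analysis of a minimum vertex cut in the two hard cases. Since $3\le\td(G)\le n$ and $\kappa(G)\le\delta(G)\le n-1$, writing $\td(G)=2n-5-\kappa(G)$ and combining $\td(G)\le n$ with $\kappa(G)\le n-1$ forces $n-5\le\kappa(G)\le n-1$, so there are exactly five possibilities:
\begin{enumerate}
\item[\bf i.] $\td(G)=n-4$ and $\kappa(G)=n-1$;
\item[\bf i\!\!i.] $\td(G)=n-3$ and $\kappa(G)=n-2$;
\item[\bf i\!\!i\!\!i.] $\td(G)=n-2$ and $\kappa(G)=n-3$;
\item[\bf i\!\!v.] $\td(G)=n-1$ and $\kappa(G)=n-4$;
\item[\bf v.] $\td(G)=n$ and $\kappa(G)=n-5$.
\end{enumerate}

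I would dispatch the three outer cases quickly. In Case i, $\kappa(G)=n-1$ makes $G$ complete, and $\td(K_n)=3$ gives $n=7$, so $G\cong K_7$. In Case ii, $\kappa\le\delta$ gives $\delta\ge n-2$; the value $\delta=n-1$ returns the complete graph (excluded by $\kappa=n-2$), so $\delta=n-2$ and $G=K_n\setminus M$ for a nonempty matching $M$, whence $\td(G)\in\{3,4\}$ by Theorem \ref{t27}. Matching this with $\td=n-3$ leaves only $n=6$ with $M$ non-perfect, i.e. $G\cong K_6\setminus M$ with $|M|<3$ (the alternative $n=7$, $M$ perfect is void since $7$ is odd). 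In Case v, Theorem \ref{t25} gives $\Delta(G)\le2$, so the connected graph $G$ is $P_n$ or $C_n$; as $\kappa(P_n)=1$ and $\kappa(C_n)=2$, the equation $\kappa=n-5$ forces $n=6$ (giving $P_6$) or $n=7$ (giving $C_7$).

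The heart of the argument is Cases iii and iv, where $G$ is neither complete nor of the form $K_n\setminus M$ (both of which have $\kappa\ge n-2$). Here I would fix a minimum vertex cut $U$, so that $|U|=\kappa$ and the induced subgraph $\langle W\rangle$ on $W:=V\setminus U$ is disconnected, with $|W|=3$ in Case iii and $|W|=4$ in Case iv. Every $w\in W$ satisfies $\deg(w)\ge\delta\ge\kappa=|U|$ and hence is non-adjacent to at most $|W|-1$ vertices of $U$. For each isomorphism type of the disconnected graph $\langle W\rangle$ --- in Case iii the two types $\overline{K_3}$ and $K_1\cup K_2$, in Case iv the five disconnected graphs on four vertices $\overline{K_4}$, $K_2\cup2K_1$, $2K_2$, $P_3\cup K_1$ and $K_3\cup K_1$ --- I would construct the explicit three-dominating set $S=W\cup D$, where $D\subseteq U$ collects the cut vertices missing some vertex of $W$; then every $u\in U\setminus D$ is adjacent to all of $W$ and so has at least $|W|\ge3$ neighbours in $S$. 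This is precisely the device of Case iii.b of Theorem \ref{t34}, and it yields $\td(G)\le|W|+|D|$. Since each $w$ misses at most $|W|-1$ vertices of $U$, the quantity $|D|$ is bounded by a constant depending only on $|W|$, so comparing $|W|+|D|$ with the prescribed value $\td=n-2$ (resp. $n-1$) bounds $n$ from above, leaving finitely many graphs to enumerate. These account for the remaining entries of the statement, including $K_{2,3}$, $K_{1,4}$, $W_5$, $K_2+K_3$, $F_2$, the listed complements, the one-point attachments $C_4(P_2,0,0,0)$, $P_3(0,P_3,0)$, $C_3(2P_2,0,0)$, $C_3(P_2,P_2,0)$, and the graphs $T_7,\dots,T_{12}$ of Figures \ref{ft104}, \ref{ft105}, \ref{ft107}.

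The main obstacle I anticipate is the bookkeeping in Case iv: with four vertices outside the cut, each of the five types for $\langle W\rangle$ branches further according to which edges are present inside $U$ and which $w$--$U$ non-adjacencies occur, and in every branch one must check simultaneously that $\td(G)$ is exactly $n-1$ (no smaller three-dominating set exists) and that $\kappa(G)$ is exactly $n-4$ (no smaller cut exists), while avoiding double-counting graphs produced by several branches. Keeping these two equalities tight at once is the delicate point; by comparison Cases i, ii, v and the converse --- the direct verification that $\td(G)+\kappa(G)=2n-5$ for each graph on the list --- are routine.
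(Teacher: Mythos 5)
Your outline reproduces the paper's proof essentially step for step: the same five-case split on $(\td(G),\kappa(G))$, the same dispatch of Cases i, ii and v (completeness; the characterization of $\delta=n-2$ as $K_n\setminus M$ together with Theorem \ref{t27}; Theorem \ref{t25} with $\kappa(P_n)=1$, $\kappa(C_n)=2$), and the same minimum-cut device in Cases iii and iv, namely enumerating the isomorphism types of $\langle V\setminus U\rangle$ and building a three-dominating set out of $V\setminus U$ plus the cut vertices that miss some vertex of $V\setminus U$.

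The one genuine divergence is that in Case iv you enumerate all five disconnected graphs on four vertices, including $K_3\cup K_1$, whereas the paper works through only $\overline{K_4}$, $K_1\cup K_1\cup K_2$, $K_1\cup P_3$ and $K_2\cup K_2$ and silently omits $K_3\cup K_1$. Your list is the correct one, and the omitted branch is not empty: take $n=5$, $U=\{u\}$, $\langle V\setminus U\rangle=K_3\cup K_1$ with every vertex of $V\setminus U$ joined to $u$, i.e. $K_4$ with one pendant vertex attached ($K_4(P_2,0,0,0)$ in the paper's notation). Here $\kappa(G)=1=n-4$, and $\td(G)=4=n-1$: the pendant vertex must lie in every three-dominating set; any three-element set containing it omits two vertices of the $K_4$, at least one of which is a triangle vertex with only two neighbours inside the set; and the four vertices other than $u$ do form a three-dominating set. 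Hence $\td(G)+\kappa(G)=5=2n-5$, yet this graph has degree sequence $(4,3,3,3,1)$ and is isomorphic to none of the graphs in the statement. So if you execute your plan faithfully you will not prove the theorem as stated --- you will refute it; the gap lies in the paper's case analysis (and hence its list), not in your outline. A smaller symptom pointing the same way: your Case v, exactly like the paper's own proof, produces $C_7$ and $P_6$, whereas the statement lists $C_6$, which in fact satisfies $\td+\kappa=2n-4$ and already appears in Theorem \ref{t34}.
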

\begin{proof}
Let
$\td(G)+\kappa(G)=2n-5$. Then,
there are f five cases to consider
\begin{enumerate}
\item[\bf i.] $\td(G)=n-4$ and $\kappa(G)=n-1$.
\item[\bf i\!\! i.] $\td(G)=n-3$ and $\kappa(G)=n-2$.
\item[\bf i\!\! i\!\! i.] $\td(G)=n-2$ and $\kappa(G)=n-3$.
\item[\bf i\!\! v.] $\td(G)=n-1$ and $\kappa(G)=n-4$.
\item[\bf  v.] $\td(G)=n$ and $\kappa(G)=n-5$.
\end{enumerate}
{\bf Case i.} $G\simeq K_7$.\\
{\bf Case i\!\! i.} In this case $\delta (G)=n-2$. Hence, $G$ is
isomorphic to $K_n\setminus\! M$ where $M$ is a matching
in $K_n$. Then, $\td(G)=3$ or $4$.
 Since $\td(G)=4$ is impossible (From Theorem\ref{t27}), we have $\td(G)=3$.
  Hence, $G$
isomorphic to $K_6\setminus\! M$ where $M$ is a  matching in $K_6$ with $|M|<3$.\\
{\bf Case i\!\! i\!\! i}. $\td(G)=n-2$
 and $\kappa(G)=n-3$.
Then, $n-3\leqslant \delta (G)\leqslant n-2$ or  $\delta (G)=n-2$.  
 Then, $n-2=\td(G)=3$ or $4$.
  $\td(G)=3$ is impossible (because $\kappa(K_5\setminus\! M)=3\neq 2$).
 Hence,  we have  $\td(G)=4$ and $n=6$.  Hence,
$G$ is isomorphic to  $K_6\setminus\! M$ which is a contradiction with $\kappa(G)$.
Let $\delta (G)=n-3=\kappa(G)$ and $U=\{u_1,u_2,\ldots,u_{n-3}\}$ be vertex cut of $G$ and let 
$V\setminus\! U=\{v_1,v_2,v_3\}$.
If $\langle V\setminus\! U\rangle=\overline{K_3}$,
Then, every vertex of $V\setminus\! U$ is
adjacent to all the vertices of $U$. Then, $V\setminus\! U$ is a three dominating set of $G$ 
hence, $n-2=\td(G)=3$ and $n=5$.
In this case  $G$ is  isomorphic to $K_{2,3}$, $K_2+K_3$.

Let $\langle V\setminus\! U\rangle=K_1\cup K_2$.
As in the  Theorem \ref{t34}{\bf.i\!\! i\!\! i.b.} we have
$\td(G)\leqslant 5$ and  $5\leqslant n\leqslant 7$.  Cases $n=3,4$ are impossible.
For $n=7$, we have $\delta(G)=4$. Hence
each  vertex  $V\setminus\! U$ is adjacent to all vertices in $U$. So $U$ is a three dominating set of $G$. then, $\td(G)\le|U|=n-3$ which is a contradiction.  For $n=5$, $G$ is isomorphic to one of graphs $H_1$ or $H_2$ as shown in Figure \ref{ft101}
\begin{center}
\begin{tikzpicture}[line width=.4mm,line cap=round,line join=round,>=triangle 45,x=1.0cm,y=1.0cm]
\draw (0.,1.)-- (0.5,0.);
\draw (0.,1.)-- (1.,2.);
\draw (1.5,0.)-- (0.5,0.);
\draw (0.,1.)-- (2.,1.);
\draw (2.,1.)-- (1.,2.);
\draw (0.,1.)-- (1.5,0.);
\draw (1.5,0.)-- (2.,1.);
\begin{scriptsize}
\draw (-0.3,-0.4) node[anchor=north west] {$H_1=\overline{P_3\cup K_1\cup K_1}$};
\draw (3.5,-0.4) node[anchor=north west] {$H_2=\overline{P_3\cup P_2}$};
\end{scriptsize}
\draw (2.,1.)-- (0.5,0.);
\draw (3.5,1.)-- (4.,0.);
\draw (4.,0.)-- (5.,0.);
\draw (5.,0.)-- (5.5,1.);
\draw (5.5,1.)-- (4.5,2.);
\draw (4.5,2.)-- (3.5,1.);
\draw (3.5,1.)-- (5.,0.);
\draw (5.5,1.)-- (4.,0.);
\draw [fill=black] (0.,1.) circle (2pt);
\draw [fill=black] (0.5,0.) circle (2pt);
\draw [fill=black] (1.,2.) circle (2.0pt);
\draw [fill=black] (1.5,0.) circle (2pt);
\draw [fill=black] (2.,1.) circle (2.0pt);
\draw [fill=black] (3.5,1.) circle (2.0pt);
\draw [fill=black] (4.,0.) circle (2.0pt);
\draw [fill=black] (5.,0.) circle (2.0pt);
\draw [fill=black] (5.5,1.) circle (2.0pt);
\draw [fill=black] (4.5,2.) circle (2.0pt);
\end{tikzpicture}\\
\newfig{ft101}
\end{center}
 For $n=6$ $G$ is isomorphic to one of graphs $T_i$ , $1\leqslant i\leqslant 6$ in Figurr \ref{ft102}.
\begin{center}
\begin{tikzpicture}[line width=.4mm,line cap=round,line join=round,>=triangle 45,x=1.0cm,y=1.0cm]
\draw (0.,1.)-- (0.5,0.);
\draw (0.,1.)-- (1.,2.);
\draw (1.,2.)-- (2.,1.);
\draw (2.,1.)-- (1.5,0.);
\draw (1.5,0.)-- (0.5,0.);
\draw (0.,1.)-- (1.,1.);
\draw (1.,1.)-- (1.,2.);
\draw (1.,1.)-- (2.,1.);
\draw (2.,1.)-- (0.5,0.);
\draw (0.,1.)-- (1.5,0.);
\draw (1.5,0.)-- (1.,1.);
\draw (3.5,1.)-- (4.5,2.);
\draw (4.5,2.)-- (5.5,1.);
\draw (5.5,1.)-- (4.5,1.);
\draw (4.5,1.)-- (3.5,1.);
\draw (3.5,1.)-- (4.,0.);
\draw (4.,0.)-- (5.,0.);
\draw (5.,0.)-- (4.5,1.);
\draw (4.5,1.)-- (4.,0.);
\draw (3.5,1.)-- (5.,0.);
\draw (7.,1.)-- (8.,2.);
\draw (8.,2.)-- (8.,1.);
\draw (8.,2.)-- (9.,1.);
\draw (9.,1.)-- (7.,1.);
\draw (7.,1.)-- (7.5,0.);
\draw (9.,1.)-- (8.5,0.);
\draw (8.5,0.)-- (7.5,0.);
\draw (9.,1.)-- (7.5,0.);
\draw (7.,1.)-- (8.5,0.);
\draw (1.,-2.)-- (0.,-3.);
\draw (0.,-3.)-- (0.5,-4.);
\draw (0.5,-4.)-- (1.5,-4.);
\draw (1.5,-4.)-- (2.,-3.);
\draw (2.,-3.)-- (1.,-2.);
\draw (1.,-2.)-- (1.,-3.);
\draw (1.,-3.)-- (0.,-3.);
\draw (1.,-3.)-- (0.5,-4.);
\draw (0.,-3.)-- (1.5,-4.);
\draw (3.5,-3.)-- (4.5,-2.);
\draw (4.5,-2.)-- (4.5,-3.);
\draw (4.5,-3.)-- (5.5,-3.);
\draw (5.5,-3.)-- (4.5,-2.);
\draw (5.5,-3.)-- (5.,-4.);
\draw (5.,-4.)-- (4.,-4.);
\draw (4.,-4.)-- (4.5,-3.);
\draw (3.5,-3.)-- (4.,-4.);
\draw (3.5,-3.)-- (5.,-4.);
\draw (7.,-3.)-- (8.,-2.);
\draw (8.,-2.)-- (8.,-3.);
\draw (8.,-3.)-- (7.5,-4.);
\draw (7.5,-4.)-- (7.,-3.);
\draw (7.,-3.)-- (8.,-3.);
\draw (8.,-2.)-- (9.,-3.);
\draw (9.,-3.)-- (8.,-3.);
\draw (8.,-3.)-- (8.5,-4.);
\draw (8.5,-4.)-- (9.,-3.);
\draw (8.5,-4.)-- (7.5,-4.);
\draw [fill=black] (0.,1.) circle (2pt);
\draw [fill=black] (0.5,0.) circle (2pt);
\draw [fill=black] (1.,2.) circle (2.0pt);
\draw [fill=black] (2.,1.) circle (2.0pt);
\draw [fill=black] (1.5,0.) circle (2pt);
\draw [fill=black] (1.,1.) circle (2.0pt);
\draw [fill=black] (3.5,1.) circle (2.0pt);
\draw [fill=black] (4.5,2.) circle (2.0pt);
\draw [fill=black] (5.5,1.) circle (2.0pt);
\draw [fill=black] (4.5,1.) circle (2.0pt);
\draw [fill=black] (4.,0.) circle (2pt);
\draw [fill=black] (5.,0.) circle (2pt);
\draw [fill=black] (7.,1.) circle (2.0pt);
\draw [fill=black] (8.,2.) circle (2.0pt);
\draw [fill=black] (8.,1.) circle (2.0pt);
\draw [fill=black] (9.,1.) circle (2.0pt);
\draw [fill=black] (7.5,0.) circle (2pt);
\draw [fill=black] (8.5,0.) circle (2.0pt);
\draw [fill=black] (1.,-2.) circle (2.0pt);
\draw [fill=black] (0.,-3.) circle (2pt);
\draw [fill=black] (0.5,-4.) circle (2.0pt);
\draw [fill=black] (1.5,-4.) circle (2.0pt);
\draw [fill=black] (2.,-3.) circle (2.0pt);
\draw [fill=black] (1.,-3.) circle (2.0pt);
\draw [fill=black] (3.5,-3.) circle (2.0pt);
\draw [fill=black] (4.5,-2.) circle (2.0pt);
\draw [fill=black] (4.5,-3.) circle (2.0pt);
\draw [fill=black] (5.5,-3.) circle (2.0pt);
\draw [fill=black] (5.,-4.) circle (2.0pt);
\draw [fill=black] (4.,-4.) circle (2.0pt);
\draw [fill=black] (7.,-3.) circle (2.0pt);
\draw [fill=black] (8.,-2.) circle (2.0pt);
\draw [fill=black] (8.,-3.) circle (2.0pt);
\draw [fill=black] (7.5,-4.) circle (2.0pt);
\draw [fill=black] (9.,-3.) circle (2.0pt);
\draw [fill=black] (8.5,-4.) circle (2.0pt);
\begin{scriptsize}
\draw (0.1,-0.5) node[anchor=north west] {$T_1=\overline{P_2\cup P_2}$};
\draw (3.2,-0.5) node[anchor=north west] {$T_2=\overline{C_4\cup K_1\cup K_1}$};
\draw (7.1,-0.5) node[anchor=north west] {$T_3=\overline{C_4\cup P_2}$};
\draw (0.3,-4.4) node[anchor=north west] {$T_4=\overline{P_6}$};
\draw (3.8,-4.5) node[anchor=north west] {$T_5=\overline{C_6}$};
\draw (7.4,-4.5) node[anchor=north west] {$T_6=W_5$};
\end{scriptsize}
\end{tikzpicture}\\ \newfig{ft102}
\end{center}
{\bf Case i\!\! v}. $\td(G)=n-1$
 and $\kappa(G)=n-4$.
 Then $n-4\leqslant \delta (G)\leqslant n-2$. 
 $\delta (G)=n-2$ is impossible.
Let $\delta (G)=n-3$ and $U=\{u_1,u_2,\ldots,u_{n-4}\}$ be vertex cut of $G$ and take 
$V\setminus\! U=\{v_1,v_2,v_3,v_4\}$.
  If is $x$   an isolated vertex in $ \langle V\setminus\! U\rangle$,  then, $d(x)\leqslant n-4<\delta(G)$ which is a
contradiction. Hence, $\langle V\setminus\! U\rangle=K_2\cup K_2$.
Then every vertex of $V\setminus\! U$ is
adjacent to all the vertices of $U$ and  $V\setminus\! U$ is a three dominating set of $G$. 
hence, $\td(G)=3$ or $4$. Since $\td(G)=3$ is impossible, we have $\td(G)=4$ and $n=5$.
  In this case, $G$ is isomprphic to $F_2$.

Take $\delta (G)=n-4=\kappa(G)$ and $U=\{u_1,u_2,\ldots,u_{n-4}\}$ be vertex cut of $G$ and let 
$V\setminus\! U=\{v_1,v_2,v_3,v_4\}$.
If $\langle V\setminus\! U\rangle=\overline{K_4}$
Then every vertex of $V\setminus\! U$ is
adjacent to all the vertices of $U$ and  $V\setminus\! U$ is a three dominating set of $G$ 
hence, $\td(G)=3$ or $4$. Since $\td(G)=3$ is impossible, we have $\td(G)=4$ and $n=5$.
Hence  $G$ is to $K_{1,4}$.
Suppose $\langle V\setminus\! U\rangle=K_1\cup K_1\cup K_2$ and $v_1,v_2$ are isolated vertex $\in V\setminus\! U$. If  $n-4\geqslant 3$, then, $n\geqslant 7$ and  $V-\{v_1,v_2\}$ is a three dominating set which is a contradiction. Therefore $4\leqslant n\leqslant 6$. $n=4,6$ is impossible.
For $n=5$ $G$ is isomorphic to $P_3(0,P_3,0)$ or $C_3(2P_2,0,0)$. \\
 Suppose $\langle V\setminus\! U\rangle=K_1\cup P_3$, $K_1=\{v_1\}$ ant $P_3=v_2v_3v_4$.
 Then $v_1$ is
adjacent to all the vertices of $U$ and 
 $v_2,v_2$ are adjacent to at least $n-5$ vertices of $U$.
 Hence if $n-5\geqslant 3$, then,  $\td(G)\leqslant 4$ whic is a contradiction.
 Therefore $5\leqslant n\leqslant 7$.
For $n=5$, $G$ is  isomorphic to $C_4(P_2,0,0,0)$ or $C_3(P_2,P_2,0)$ or  graphs $T_7$, $T_8$, $T_9$ as shown in Figures \ref{ft104},\ref{ft105}.
\begin{center}
\begin{tikzpicture}[line width=.4mm,line cap=round,line join=round,>=triangle 45,x=1.0cm,y=1.0cm]
\draw (0.,0.)-- (1.,0.);
\draw (1.,0.)-- (2.,0.);
\draw (3.,0.)-- (1.5,2.);
\draw (1.5,2.)-- (2.,0.);
\draw (1.5,2.)-- (1.,0.);
\draw (1.5,2.)-- (0.,0.);
\draw [fill=black] (0.,0.) circle (2pt);
\draw [fill=black] (1.,0.) circle (2pt);
\draw [fill=black] (2.,0.) circle (2pt);
\draw [fill=black] (3.,0.) circle (2pt);
\draw [fill=black] (1.5,2.) circle (2pt);
\begin{scriptsize}
\draw (1.2,-0.4) node[anchor=north west] {$T_7$};
\end{scriptsize}
\end{tikzpicture}\\
\newfig{ft104}
\end{center}
 For $n=6$ suppose $U=\{x,y\}$.
Then, $G$ is isomorphic to  
one of graphs $T_8$, $T_9$ as shown in Figure \ref{ft105}.
\begin{center}
\begin{tikzpicture}[line width=.4mm,line cap=round,line join=round,>=triangle 45,x=1.0cm,y=1.0cm]
\draw (0.,0.)-- (1.,0.);
\draw (1.,0.)-- (2.,0.);
\draw (0.5,1.)-- (1.,2.);
\draw (1.,2.)-- (1.5,1.);
\draw (1.5,1.)-- (2.,0.);
\draw (2.,0.)-- (0.5,1.);
\draw (0.5,1.)-- (0.,0.);
\draw (3.5,0.)-- (4.5,0.);
\draw (4.5,0.)-- (5.5,0.);
\draw (5.5,0.)-- (5.,1.);
\draw (5.,1.)-- (4.5,2.);
\draw (4.5,2.)-- (4.,1.);
\draw (4.,1.)-- (4.5,0.);
\draw (4.,1.)-- (3.5,0.);
\draw [fill=black] (0.,0.) circle (2pt);
\draw [fill=black] (1.,0.) circle (2pt);
\draw [fill=black] (2.,0.) circle (2pt);
\draw [fill=black] (0.5,1.) circle (2pt);
\draw [fill=black] (1.,2.) circle (2pt);
\draw [fill=black] (1.5,1.) circle (2pt);
\draw [fill=black] (3.5,0.) circle (2pt);
\draw [fill=black] (4.5,0.) circle (2pt);
\draw [fill=black] (5.5,0.) circle (2pt);
\draw [fill=black] (5.,1.) circle (2pt);
\draw [fill=black] (4.5,2.) circle (2pt);
\draw [fill=black] (4.,1.) circle (2pt);
\begin{scriptsize}
\draw (1.15,2.2) node[anchor=north west] {$v_1$};
\draw (4.65,2.2) node[anchor=north west] {$v_1$};
\draw (0,1.2) node[anchor=north west] {$x$};
\draw (3.5,1.2) node[anchor=north west] {$x$};
\draw (1.6,1.2) node[anchor=north west] {$y$};
\draw (5.1,1.2) node[anchor=north west] {$y$};
\draw (0.7,-0.2) node[anchor=north west] {$T_8$};
\draw (4.2,-0.2) node[anchor=north west] {$T_9$};
\end{scriptsize}
\end{tikzpicture}
\\
\newfig{ft105}
\end{center}
For  $n=7$, we have  $\delta(G)=3$. Hence, $V\setminus\! \{v_1,v_2\}$ is a three dominating set which is a contradiction.
  Finally suppose
$\langle V\setminus\! U\rangle=K_2\cup K_2$. 
Then at most three vertices $x,y,z$ in $V$ exsist so that $N(x)\cap (V\setminus\! U)<3$,
$N(y)\cap (V\setminus\! U)<3$ and $N(z)\cap (V\setminus\! U)<3$.
 Hence, $\{v_1,v_2,v_3,v_4,x,y,z\}$ is a three dominating set of $G$.
 Therefore  $n-1=\td(G)\le 7$. This gives $4=|V\setminus\! U|\leqslant n\leqslant 8$.
$n=4$ is impossible.
 For $n=5$, $G$ is isomprphic to $C_3(P_3,0,0)$ or $F_2$.
 For $n=6$, $G$ is isomorphic to  
one of graphs $T_{10}$, $T_{11}$ or $T_{12}$ as shown in figure\ref{ft105}.
\begin{center}
\begin{tikzpicture}[line width=.4mm, line cap=round,line join=round,>=triangle 45,x=1.23cm,y=1.5cm]
\draw (0.4,0.)-- (1.,0.);
\draw (-0.8,0.)-- (-0.2,0.8);
\draw (-0.8,0.)-- (-0.2,0.);
\draw (1.8,0.)-- (2.4,0.);
\draw (3.,0.)-- (3.6,0.);
\draw (3.6,0.)-- (2.4,0.8);
\begin{scriptsize}
\draw (5,-0.2) node[anchor=north west] {$T_{12}$};
\draw (-.15,-0.2) node[anchor=north west] {$T_{10}$};
\draw (2.45,-0.2) node[anchor=north west] {$T_{11}$};
\end{scriptsize}
\draw (-0.2,0.8)-- (-0.2,0.);
\draw (-0.2,0.8)-- (0.4,0.);
\draw (2.4,0.8)-- (3.,0.);
\draw (2.4,0.8)-- (1.8,0.);
\draw (0.4,0.8)-- (-0.2,0.);
\draw (0.4,0.8)-- (1.,0.);
\draw (2.4,0.8)-- (3.,0.8);
\draw (3.,0.8)-- (2.4,0.);
\draw (3.,0.8)-- (3.,0.);
\draw (4.4,0.)-- (5.,0.);
\draw (5.,0.8)-- (4.4,0.);
\draw (5.,0.8)-- (5.6,0.8);
\draw (5.6,0.8)-- (5.,0.);
\draw (5.,0.8)-- (5.6,0.);
\draw (5.6,0.)-- (6.2,0.);
\draw (5.6,0.8)-- (6.2,0.);
\draw [fill=black] (-0.2,0.) circle (2pt);
\draw [fill=black] (0.4,0.) circle (2pt);
\draw [fill=black] (1.,0.) circle (2pt);
\draw [fill=black] (-0.8,0.) circle (2pt);
\draw [fill=black] (-0.2,0.8) circle (2pt);
\draw [fill=black] (1.8,0.) circle (2pt);
\draw [fill=black] (2.4,0.) circle (2pt);
\draw [fill=black] (3.,0.) circle (2pt);
\draw [fill=black] (3.6,0.) circle (2pt);
\draw [fill=black] (2.4,0.8) circle (2pt);
\draw [fill=black] (0.4,0.8) circle (2pt);
\draw [fill=black] (3.,0.8) circle (2pt);
\draw [fill=black] (4.4,0.) circle (2pt);
\draw [fill=black] (5.,0.) circle (2pt);
\draw [fill=black] (5.,0.8) circle (2pt);
\draw [fill=black] (5.6,0.8) circle (2pt);
\draw [fill=black] (5.6,0.) circle (2pt);
\draw [fill=black] (6.2,0.) circle (2pt);
\end{tikzpicture}\\
\newfig{ft107}
\end{center}
For $n=7,8$ we have $\delta(G)\geqslant3$. Suppose  $K_1=\langle v_1,v_2\rangle$ and   $K_2=\langle v_3,v_4\rangle$.
 Then, $\{v_1,v_3\}\cup U$ is a three dominating set in $G$ which is a contradiction.\\
{\bf Case v.} $\td(G)=n$
 and $\kappa(G)=n-5$.
 Then, $n-5\leqslant \delta (G)\leqslant 2$ and $5\leqslant n\leqslant 7$.
  $n=5$ is impossible.
 For $n=7$, $G$ is isomorphic to $C_7$ and for $n=6$, $G$ is isomorphic to $P_6$.
The converse is obvious.
\end{proof}

\bigskip
\bigskip
{\footnotesize {\bf R. Safakish}\; \\
Department of Mathematics, Faculty of Science,\\ bu-ali sina University,\\ Hamedan,  I. R. Iran \\
{\tt Email: safakish@basu.ac.ir}\\
\bigskip
\bigskip
{\footnotesize {\bf S. Mehry}\; \\
Department of Mathematics, Faculty of Science,\\ bu-ali sina University,\\ Hamedan,  I. R. Iran \\
{\tt Email: sh.mehry@basu.ac.ir}\\

\end{document}